\def\blfootnote{\gdef\@thefnmark{}\@footnotetext}
\tikzset{>=stealth'}
\def\arrowLengthDisplayStyle{4ex}
\def\arrowHeightDisplayStyle{.8ex}
\def\arrowSkipDisplayStyle{.5ex}
\def\arrowLengthTextStyle{3ex}
\def\arrowHeightTextStyle{.8ex}
\def\arrowSkipTextStyle{.4ex}
\def\arrowLengthScriptStyle{2.5ex}
\def\arrowHeightScriptStyle{.6ex}
\def\arrowSkipScriptStyle{.3ex}
\def\arrowLengthScriptScriptStyle{2ex}
\def\arrowHeightScriptScriptStyle{.4ex}
\def\arrowSkipScriptScriptStyle{.2ex}
\renewcommand{\to}{\arrow{->}}
\newcommand{\MakeTikzArrowWithSuperscriptSubscript}[4]
{
	\mathchoice
	{ 
		\hspace*{\arrowSkipDisplayStyle}
		\begin{tikzpicture}[baseline]
		\draw [#1] (0,\arrowHeightDisplayStyle) -- node [above] {$#2$} node [below] {$#3$} (#4 * \arrowLengthDisplayStyle, \arrowHeightDisplayStyle);
		\end{tikzpicture}
		\hspace*{\arrowSkipDisplayStyle}
	}
	{ 
		\hspace*{\arrowSkipTextStyle}
		\begin{tikzpicture}[baseline]
		\draw [#1] (0,\arrowHeightTextStyle) -- node [above] {$\scriptstyle #2$} node [below] {$\scriptstyle #3$} (#4 * \arrowLengthTextStyle, \arrowHeightTextStyle);
		\end{tikzpicture}
		\hspace*{\arrowSkipTextStyle}
	}
	{ 
		\hspace*{\arrowSkipScriptStyle}
		\begin{tikzpicture}[baseline]
		\draw [#1] (0,\arrowHeightScriptStyle) -- node [above] {$\scriptscriptstyle #2$} node [below] {$\scriptscriptstyle #3$} (#4 * \arrowLengthScriptStyle, \arrowHeightScriptStyle);
		\end{tikzpicture}
		\hspace*{\arrowSkipScriptStyle}
	}
	{ 
		\hspace*{\arrowSkipScriptScriptStyle}
		\begin{tikzpicture}[baseline]
		\draw [#1] (0,\arrowHeightScriptScriptStyle) -- node [above] {$\scriptscriptstyle #2$} node [below] {$\scriptscriptstyle #3$} (#4 * \arrowLengthScriptScriptStyle, \arrowHeightScriptScriptStyle);
		\end{tikzpicture}
		\hspace*{\arrowSkipScriptScriptStyle}
	}
}
\newcommand{\MakeTikzArrowWithCentralLabel}[3]
{
	\mathchoice
	{ 
		\hspace*{\arrowSkipDisplayStyle}
		\begin{tikzpicture}[baseline]
		\draw [#1] (0,\arrowHeightDisplayStyle) -- node [fill=white,inner sep=1pt] {$#2$} (#3 * \arrowLengthDisplayStyle, \arrowHeightDisplayStyle);
		\end{tikzpicture}
		\hspace*{\arrowSkipDisplayStyle}
	}
	{ 
		\hspace*{\arrowSkipTextStyle}
		\begin{tikzpicture}[baseline]
		\draw [#1] (0,\arrowHeightTextStyle) -- node [fill=white,inner sep=1pt] {$\scriptstyle #2$} (#3 * \arrowLengthTextStyle, \arrowHeightTextStyle);
		\end{tikzpicture}
		\hspace*{\arrowSkipTextStyle}
	}
	{ 
		\hspace*{\arrowSkipScriptStyle}
		\begin{tikzpicture}[baseline]
		\draw [#1] (0,\arrowHeightScriptStyle) -- node [fill=white,inner sep=1pt] {$\scriptscriptstyle #2$} (#3 * \arrowLengthScriptStyle, \arrowHeightScriptStyle);
		\end{tikzpicture}
		\hspace*{\arrowSkipScriptStyle}
	}
	{ 
		\hspace*{\arrowSkipScriptScriptStyle}
		\begin{tikzpicture}[baseline]
		\draw [#1] (0,\arrowHeightScriptScriptStyle) -- node [fill=white,inner sep=1pt] {$\scriptscriptstyle #2$} (#3 * \arrowLengthScriptScriptStyle, \arrowHeightScriptScriptStyle);
		\end{tikzpicture}
		\hspace*{\arrowSkipScriptScriptStyle}
	}
}
\def\arrow#1{\def\lastArrowStyle{#1}
	\futurelet\testchar\arrowMaybeStreched}
\def\arrowMaybeStreched{\ifx[\testchar \let\next\arrowStreched
	\else \let\next\arrowUnstreched \fi
	\next}
\def\arrowStreched[#1]{\def\lastArrowStrech{#1}
	\futurelet\testchar\arrowMaybeLabel}
\def\arrowUnstreched{\def\lastArrowStrech{1}
	\futurelet\testchar\arrowMaybeLabel}
\def\arrowMaybeLabel{\ifx^\testchar \let\next\arrowSuperscript
	\else \ifx_\testchar \let\next\arrowSubscript
	\else \ifx~\testchar \let\next\arrowCentralLabel
	\else \let\next\arrowNoLabel
	\fi
	\fi
	\fi
	\next}
\def\arrowSuperscript^#1{\def\lastArrowSuperscript{#1}
	\futurelet\testchar\arrowSuperMaybeSub}
\def\arrowSuperMaybeSub{\ifx_\testchar \let\next\arrowSuperscriptSubscript
	\else \let\next\arrowSuperscriptNoSubscript \fi
	\next}
\def\arrowSubscript_#1{\def\lastArrowSubscript{#1}
	\futurelet\testchar\arrowSubMaybeSuper}
\def\arrowSubMaybeSuper{\ifx^\testchar \let\next\arrowSubscriptSuperscript
	\else \let\next\arrowSubscriptNoSuperscript \fi
	\next}
\def\arrowSuperscriptSubscript_#1{\def\lastArrowSubscript{#1}
	\arrowDrawSupSub}
\def\arrowSuperscriptNoSubscript{\def\lastArrowSubscript{}
	\arrowDrawSupSub}
\def\arrowSubscriptSuperscript^#1{\def\lastArrowSuperscript{#1}
	\arrowDrawSupSub}
\def\arrowSubscriptNoSuperscript{\def\lastArrowSuperscript{}
	\arrowDrawSupSub}
\def\arrowNoLabel{\def\lastArrowSuperscript{}
	\def\lastArrowSubscript{}
	\arrowDrawSupSub}
\def\arrowCentralLabel~#1{\MakeTikzArrowWithCentralLabel{\lastArrowStyle}{#1}{\lastArrowStrech}}
\def\arrowDrawSupSub{\MakeTikzArrowWithSuperscriptSubscript{\lastArrowStyle}{\lastArrowSuperscript}{\lastArrowSubscript}{\lastArrowStrech}}
\theoremstyle{definition}
\newtheorem{thm}{Theorem}[section]
\newtheorem{lemma}[thm]{Lemma}
\newtheorem{proposition}[thm]{Proposition}
\newtheorem{example}[thm]{Example}
\newtheorem{definition}[thm]{Definition}
\def\map{\mathop{\rm map}\nolimits}
\def\dirlim{\mbox{\,\hbox{lim}\kern-1.5em
                  \lower1.5ex\hbox{$\longrightarrow$}\,}}
\def\R{{\mathbb{R}}}
\def\S{\mathcal{S}}
\def\reeb{\mathcal{G}}
\def\truncreeb{\mathcal{T}}
\def\homology{\mathrm{H}}
\def\id{\mathrm{id}}
\def\pr{\mathrm{pr}}
\def\coker{\mathrm{coker}}
\def\Sect{\mathrm{Sect}}
\def\from{\colon}
\def\to{\rightarrow}
\DeclareMathAlphabet{\mathcal}{OMS}{cmsy}{m}{n}	
\DeclareRobustCommand{\coprod}{\mathop{\text{\fakecoprod}}}
\newcommand{\fakecoprod}{%
  \sbox0{$\prod$}%
  \smash{\raisebox{\dimexpr.9625\depth-\dp0}{\scalebox{1}[-1]{$\prod$}}}%
  \vphantom{$\prod$}%
}	
\title{Reeb complexes and topological persistence}
\author{Melvin Vaupel, Erik Hermansen, Paul Trygsland}
\date{}
\begin{document}

\blfootnote{\scriptsize{Email: melvin.vaupel@ntnu.no, erik.hermansen@ntnu.no, paul.trygsland@ntnu.no}}

\blfootnote{\scriptsize{Departement of Mathematical Sciences, Norwegian University of Science and Technology, Trondheim, Norway}}

\maketitle

\begin{abstract}
    \noindent We introduce Reeb complexes in order to capture how generators of homology flow along sections of a real valued continuous function. This intuition suggests a close relation of Reeb complexes to established methods in topological data analysis such as levelset zigzags and persistent homology. We make this relation precise and in particular explain how Reeb complexes and levelset zigzags can be extracted from the first pages of respective spectral sequences with the same termination.
\end{abstract}
\section{Introduction}
In this paper we study two different structures associated to a real-valued continuous function~$f \from X \to \R$. 
\begin{enumerate}
    \item Covers~$U=\{U_a = f^{-1}(I_a) \subset X \}$ of a topological space~$X$ pulled back from covers~$\{I_a \subset \R\}$ of the real line.
    \item Sections of~$f$, that is, for real numbers~$a \leq b$ continuous maps~$s \from [a,b] \to X$, such that~$f \circ s =\id$. 
\end{enumerate}
In both cases the available information is neatly organized in a \textit{simplicial space}. \\
For the pulled back cover~$U$, this is the well known \textit{\v{C}ech complex}.
 \begin{equation*}
    \begin{tikzcd}
    \coprod U_{\alpha_0} & \coprod U_{\alpha_0 \alpha_1} \arrow[l,yshift=0.5ex] \arrow[l,yshift=-0.5ex]  & \coprod U_{\alpha_0 \alpha_1 \alpha_2}  \arrow[l,yshift=0.7ex] \arrow[l] \arrow[l,yshift=-0.7ex] \cdots
    \end{tikzcd}
\end{equation*}
It combines the \textit{topology} of intersections of cover elements $U_{a_0 \ldots a_n}= U_{a_0}\cap \ldots \cap U_{a_n}$ with the \textit{combinatorics} of the various possible inclusions. Combining these two pieces of information, it is possible to recover the homology of the base space~$X$. Indeed, it can be shown that the realization of the \v{C}ech complex is homotopy equivalent to~$X$. To compute the homology of~$X$ from the \v{C}ech complex it can be helpful to utilize a spectral sequence. It turns out that various objects of interest to topological data analysis can be extracted from the first page of this spectral sequence, i.e. as intermediate steps of the corresponding homology computation. In Section \ref{section:cech}, we exemplify this with the persistence modules of a filtration \cite{carlsson2009topology} and the \textit{levelset zigzag} of a real valued height function like defined in \cite{carlsson2009zigzag}. \\

The simplicial space associated to the sections between a subset of heights~$A \subset \R$ is the \textit{section complex}. 
\begin{equation*}
   S_f^A \from \begin{tikzcd}
    (\S^A_f)_0 & (\S^A_f)_1 \arrow[l,yshift=0.5ex] \arrow[l,yshift=-0.5ex]  & (\S^A_f)_2  \arrow[l,yshift=0.7ex] \arrow[l] \arrow[l,yshift=-0.7ex] \dots
    \end{tikzcd}
\end{equation*} 
In analogy to the \v{C}ech complex, the section complex encodes the topology and combinatorics of sections. We discuss it thoroughly at the beginning of Section \ref{section:SectionComplex}, but in short: the topology is supposed to capture how sections relate via homotopies, while the combinatorics encode the various ways to concatenate sections. \\
Applying homology levelwise to the section complex yields 
\begin{equation*}
	\reeb_q^A \from \begin{tikzcd}
	 \homology_q (\S^A_f)_0 & \homology_q (\S^A_f)_1 \arrow[l,yshift=0.5ex] \arrow[l,yshift=-0.5ex]  & \homology_q(\S^A_f)_2  \arrow[l,yshift=0.7ex] \arrow[l] \arrow[l,yshift=-0.7ex] \dots  
	 \end{tikzcd}
 \end{equation*} 
 - a simplicial abelian group that we call the \textit{Reeb complex}. It is an object that captures how generators of homology flow between fibers of~$f$ along sections. 
 We may arrange the Reeb complexes as the first page of a spectral sequence, called the \textit{section spectral sequence}. In \cite{trygsland2021} it is shown how, under mild regularity assumptions on~$f$, this sequence collapses on the second page and computes the homology of the base space~$X$. The Reeb complexes can then be understood as an intermediate step in a calculation of the homology of~$X$. Thus, it comes at no surprise that they are intimately related to the well known persistence modules, that we can extract from the spectral sequence of the \v{C}ech complex. We make these relations precise in Proposition \ref{prop:diamondPrinciple} and Proposition \ref{proposition:persistence}. \\
 It is worthwhile to emphasize that if~$f$ is a piecewise linear function, the Reeb complexes are amenable for practical computations. Indeed, combining Proposition \ref{proposition:comparereeb} of this paper with Corollary 4.4 of \cite{Vaupel_SectionComplex_2022} implies that for piecewise linear functions we can compute the Reeb complexes in terms of the simplicial theory developed in the latter paper. \\
 
To summarise, this paper relates the theory of sections developed in \cite{trygsland2021} and \cite{Vaupel_SectionComplex_2022} to topological data analysis. To this end we define Reeb complexes and prove a close correspondence to known objects in topological data analysis like the persistence module of a filtration and levelset zigzag modules. The existence of this correspondence is motivated by the following discussion of the \v{C}ech complex.

\newpage 
 
\section{The \v{C}ech complex and topological data analysis}\label{section:cech}
 We explain how to obtain the persistence module of a filtration as well as the levelset zigzag module of a Morse-type function by applying homology levelwise to the \v{C}ech complex of an appropriate cover. Then we exhibit these modules as constituting the first page of a spectral sequence, that computes the homology of the covered space. This perspective on topological data analysis is very related to the sheaf-theoretic ideas explored for example in \cite{de2016categorified}, \cite{bubenik2014categorification}, \cite{curry2016discrete} and \cite{curry2014sheaves}. Consequently this section makes no claim to be original. The intention is rather to present well-known methods of topological data analysis in a way that makes their relation to the Reeb complexes of Section \ref{section:SectionComplex} plausible.   

Given a covering~$U=(U_\alpha)_{\alpha\in\Sigma}$ of a topological space $X$, consider the following diagram of continuous maps:
\begin{equation*}
    \begin{tikzcd}
    \coprod U_{\alpha_0} & \coprod U_{\alpha_0 \alpha_1} \arrow[l,yshift=0.5ex] \arrow[l,yshift=-0.5ex]  & \coprod U_{\alpha_0 \alpha_1 \alpha_2}  \arrow[l,yshift=0.7ex] \arrow[l] \arrow[l,yshift=-0.7ex] \cdots
    \end{tikzcd}
\end{equation*}
Here~$U_{\alpha_0,\ldots,\alpha_n}$ denotes the intersection~$U_{\alpha_0} \cap \ldots \cap U_{\alpha_n}$ and the arrows are the various ways of omitting indices and then applying inclusions. Interpreting these maps as face maps yields a \textit{simplicial space} $\breve{C}(U)$, which is also commonly referred to as the \textit{\v{C}ech complex} of the cover~$U$. \\

Consider a continuous real-valued function~$f \from X \to \R$. We can pull back a cover of~$\R$ along $f$ and thereby obtain a cover of the space~$X$. The corresponding simplicial vector space of this cover is then often a well-known object in topological data analysis, like we now wish to demonstrate with two examples.   
\subparagraph{Levelset zigzag} Let~$I_k \subset \R$ be a finite collection of open intervals such that~$U_k=f^{-1}I_k$ forms an open cover of~$X$ for which~$U_k\cap U_l\neq \emptyset$ only if ~$l=k\pm1$. Applying homology to the \v{C}ech complex of this pullback cover gives
\begin{equation*}
    \begin{tikzcd}
    \bigoplus\limits_{k=1}^{n} \homology_q(U_k) & \bigoplus\limits_{k=1}^{n-1} \homology_q(U_k \cap U_{k+1}) \arrow[l,yshift=0.5ex] \arrow[l,yshift=-0.5ex] 
    \end{tikzcd}
\end{equation*}
We can then wrap out the direct sums to obtain a zigzag module:
\begin{equation*}
\begin{tikzcd}
\homology_q(U_1) \leftarrow \homology_q(U_1 \cap U_{2}) \rightarrow \homology_q(U_2) \leftarrow \homology_q(U_2 \cap U_{3}) \rightarrow \cdots \leftarrow \homology_q(U_{n-1}\cap U_n) \rightarrow \homology_q(U_n). 
\end{tikzcd}
\end{equation*}
Note that this was possible because of the absence of higher simplices or else the diagram would have been much more complicated. \\
Furthermore, if~$f$ is of \textit{Morse type} with~$n-1$ critical values~$c_1, \ldots,c_{n-1}$, we may arrange a cover that is pulled back from~$k+1$ intervals~$I_1,\ldots,I_{k+1} \subset \R$, like above, but with the further requirement~$c_k$ lies in the intersection~$I_k \cap I_{k+1}$. Then the above zigzag module is isomorphic to
\footnotesize
\begin{equation*}
\begin{tikzcd}
\homology_q(f^{-1}(-\infty,c_1]) \leftarrow \homology_q(f^{-1}c_1) \rightarrow \homology_q(f^{-1}[c_1,c_2]) \leftarrow \homology_q(f^{-1}c_2) \rightarrow \cdots \leftarrow \homology_q(f^{-1}c_n) \rightarrow \homology_q(f^{-1}[c_n,\infty)). 
\end{tikzcd}
\end{equation*}
\normalsize
This is the \textit{levelset zigzag module} of $f$.\\

The persistence module associated to a \textit{filtration} can in a certain way be seen as a special case of this construction. 

\subparagraph{Persistent homology} Consider a filtration of topological spaces:
\begin{equation*}
    \mathbb{X} \from X_{i_0} \hookrightarrow X_{i_1} \hookrightarrow \ldots \hookrightarrow X_{i_{n-1}} \hookrightarrow X_{i_n}
\end{equation*}
We denote its \textit{mapping telescope} by~$C_{\mathbb{X}}$. It is obtained as the colimit of the following diagram:
\begin{center}
    \begin{tikzcd}[column sep=0.1in]
         & X_{i_0} \arrow[dl,"(\id \text{,}i_1)"'] \arrow[dr,"(\iota\text{,}i_1)"] &  &  &  & X_{i_{n-1}} \arrow[dl,"(\id \text{,}i_{n})"'] \arrow[dr,"(\iota \text{,}i_n)"] & \\
         X_{i_0} \times [i_0,i_1] & & X_{i_1} \times [i_1,i_2] & \cdots &  X_{i_{n-1}} \times [i_{n-1},i_n]  & & X_{i_n} 
    \end{tikzcd}
\end{center}
and we get an induced height function~$f_{\mathbb{X}} \from C_{\mathbb{X}} \to \R$. Along this function, pull back a cover of~$C_{\mathbb{X}}$ from open intervals in~$I_k \subset \R$ as in the construction of the levelset zigzag above, so that~$i_k \in I_k \cap I_{k+1}$. We then obtain the zigzag 
\begin{equation*}
\begin{tikzcd}
\homology_q(f_{\mathbb{X}}^{-1}i_0) \overset{\alpha_0}{\leftarrow} \homology_q(f_{\mathbb{X}}^{-1}[i_0,i_1]) \overset{\beta_1}{\rightarrow}  \cdots \overset{\alpha_{n-1}}{\leftarrow} \homology_q(f_{\mathbb{X}}^{-1}[i_{n-1},i_n]) \overset{\beta_{n}}\rightarrow \homology_q(f_{\mathbb{X}}^{-1}i_n),
\end{tikzcd}
\end{equation*}
where we observe that all~$\alpha_k$ are isomorphisms. We may thus write the persistence module
\begin{equation*}
\begin{tikzcd}
    \homology_q (f_{\mathbb{X}}^{-1}i_0) \arrow[r,"\beta_1\circ \alpha_0^{-1}"] & \homology_q(f_{\mathbb{X}}^{-1}i_1) \arrow[r,"\beta_2\circ \alpha_1^{-1}"] & \cdots \arrow[r,"\beta_n\circ \alpha_{n-1}^{-1}"] & \homology_q(f_{\mathbb{X}}^{-1}i_n).
\end{tikzcd}
    \end{equation*}
Compare this to the usual persistence module associated to the filtration
\begin{equation*}
\homology_q \mathbb{X}: \begin{tikzcd}
    \homology_q X_{i_0} \arrow[r] & \homology_q X_{i_1} \arrow[r] & \cdots \arrow[r] & \homology_q(X_{i_n}),
\end{tikzcd}
    \end{equation*}
and observe that they are isomorphic (0-interleaved).

\subparagraph{The spectral sequence of an open cover}
 We return to a general cover~$U=(U_\alpha)_{\alpha\in\Sigma}$ of $X$. Applying any homology functor levelwise to the \v{C}ech complex of this cover gives 
\begin{equation*}
    \begin{tikzcd}
    \bigoplus \homology_q(U_{\alpha_0}) & \bigoplus \homology_q(U_{\alpha_0 \alpha_1}) \arrow[l,yshift=0.5ex] \arrow[l,yshift=-0.5ex]  & \bigoplus \homology_q(U_{\alpha_0 \alpha_1 \alpha_2})  \arrow[l,yshift=0.7ex] \arrow[l] \arrow[l,yshift=-0.7ex] \cdots,
    \end{tikzcd}
\end{equation*}
which we recognize as a \textit{simplicial abelian group}. We may turn it into a chain complex  
 \begin{equation*}
    \begin{tikzcd}
    \bigoplus \homology_q(U_{\alpha_0}) & \bigoplus \homology_q(U_{\alpha_0 \alpha_1}) \arrow[l,"\partial^1_{1,q}"']  & \bigoplus \homology_q(U_{\alpha_0 \alpha_1 \alpha_2})  \arrow[l,"\partial^1_{2,q}"'] \cdots.
    \end{tikzcd}
\end{equation*}
The differential~$\partial^1_{p,q}$ is induced in~$\homology_q$ from the alternating sum of face maps. For example, for~$p=1$ these are~$\partial^1_{1,q}=\homology_q d_0-\homology_q d_1$. Organizing all the chain complexes for different~$q$'s gives us the first page of a \textit{spectral sequence}. 

\begin{center}
\begin{tikzpicture}
	\draw (0,0) -- (8.5,0);
	\draw (0,0) -- (0,4.5);
	
	\draw[->] (3.5,1.1) -- (2.35,1.1);
	\draw[->] (3.5,2.6) -- (2.35,2.6);
	\draw[->] (3.5,4.1) -- (2.35,4.1);
	
	\draw[->] (6.75,1.1) -- (5.6,1.1);
	\draw[->] (6.75,2.6) -- (5.6,2.6);
	\draw[->] (6.75,4.1) -- (5.6,4.1);
	
	\node [above] at (3,1) {$\partial^1_{1,0}$};
	\node [above] at (3,2.5) {$\partial^1_{1,1}$};
	\node [above] at (3,4) {$\partial^1_{1,2}$};
	
	\node [above] at (6.25,1) {$\partial^1_{2,0}$};
	\node [above] at (6.25,2.5) {$\partial^1_{2,1}$};
	\node [above] at (6.25,4) {$\partial^1_{2,2}$};
	
	\node [] at (1.5,1) {$\bigoplus\limits_{a_0\in \Sigma}\homology_0  U_{a_0}$};
	\node [] at (1.5,2.5) {$\bigoplus\limits_{a_0 \in \Sigma}\homology_1  U_{a_0}$};
	\node [] at (1.5,4) {$\bigoplus\limits_{a_0 \in \Sigma}\homology_2  U_{a_0}$};
	
	\node [] at (4.5,1) {$\bigoplus\limits_{a_0,a_1 \in \Sigma}\homology_0  U_{a_0a_1}$};
	\node [] at (4.5,2.5) {$\bigoplus\limits_{a_0,a_1 \in \Sigma}\homology_1  U_{a_0a_1}$};
	\node [] at (4.5,4) {$\bigoplus\limits_{a_0,a_1 \in \Sigma}\homology_2  U_{a_0a_1}$};
	
	\node [] at (8,1) {$\bigoplus\limits_{a_0,a_1,a_2 \in \Sigma}\homology_0  U_{a_0a_1a_2}$};
	\node [] at (8,2.5) {$\bigoplus\limits_{a_0,a_1,a_2 \in \Sigma}\homology_1  U_{a_0a_1a_2}$};
	\node [] at (8,4) {$\bigoplus\limits_{a_0,a_1,a_2 \in \Sigma}\homology_2  U_{a_0a_1a_2}$};

	\node [right] at (8.5,0) {$p$};
	\node [above] at (0,4.5) {$q$};
\end{tikzpicture}
\end{center}
Due to a result that goes back to Segal \cite{segal1968classifying}, which was later generalized in \cite{dugger2004topological}, we know that the termination of this sequence is always~$\homology_q X$ - the homology of the covered space. Furthermore, for  finite covers the sequence will eventually collapse and we are thus able to recover the homology of~$X$ by combining the \textit{topology} and \textit{combinatorics} of the cover~$U$. In practise this is achieved by solving Mayer-Vietoris-like extension problems to compute higher differentials. \\
Let us look at two cases in which these computations are particularly straightforward. The first is the one of a cover~$U=(U_{\alpha})_{\alpha\in\Sigma}$ of $X$ for which all intersections are contractible, also commonly referred to as a \textit{good cover}. Then, only the lowest row of the above first spectral sequence page is non-trivial. Furthermore, all the summands in its terms are either singletons or zero. We recognize it as the chain complex associated to the \textit{nerve} of the cover~$U$. In this case, the \textit{topology} of the cover is trivial and we can recover the homology of~$X$ just with the \textit{combinatorics} of the cover, encoded in its nerve. \\
As another special case, we consider a cover that has at most non-empty pairwise intersections. Then, just the first two columns are non-trivial. Again, the spectral sequence collapses on the second page and we read of the homology of~$X$ as 
\begin{align*}
\homology_q(X) \cong \begin{cases} \coker \partial_{1,0}^1 \quad q = 0 \\
                  \ker\partial_{1,q-1}^1 \oplus \coker\partial_{1,q}^1 \quad q \geq 1 
                  \end{cases}
\end{align*}

Note that in particular we can use this strategy to compute the homology of~$X$ from the levelset zigzag modules described above. \\

\newpage

\section{Reeb complexes}\label{section:SectionComplex}
\subparagraph{The section complex}
Let~$f \from X \to \mathbb{R}$ be a continuous function on a topological space. A~\emph{section} of~$f$ between heights~$a\leq b$ is a continuous map~$\rho \colon [a,b]\rightarrow X$, such that the composition~$f\circ \rho$ is the inclusion~$[a,b]\hookrightarrow \mathbb{R}$. These sections assemble into~$\Sect_f[a,b]$ - a subspace of the mapping space~$\map([a,b],X)$ with the compact-open topology. Fix a subset~$A \subset \R$. We define a series of spaces~$(\S^A_f)_0,(\S^A_f)_1,(\S_f^A)_2,\ldots$
\begin{itemize}
    \item The space~$(\S_f^A)_0$ is given as $\coprod\limits_{a \in A} f^{-1}(a)$, i.e. the disjoint union of fibers of the map~$f$.
    \item To obtain $(\S^A_f)_1$ collect all the sections going between heights in~$A$ into one \textit{space of sections} by taking the disjoint union over ordered pairs in~$A$, that is $\coprod\limits_{a\leq b} \Sect_f[a,b]$.
    \item For~$p \geq 2$, the space~$(S^A_f)_p$ has as points all the ways to concatenate~$p$ sections. Let for example~$\sigma \in \mathrm{Sect}_f[a,b]$ and~$\rho \in \mathrm{Sect}_f[b,c]$ be two sections with compatible ending and starting points. These can be concatenated to a section~$\sigma \ast \rho \in \mathrm{Sect}_f[a,c]$. We denote all possible ways to obtain such  concatenations 
    by~$\Sect_f[a,b,c]$. Then we induce the structure of a topological space from~$\Sect_f[a,b]$ and~$\Sect_f[b,c]$. Again, taking the disjoint union over all triples of heights~$a \leq b \leq c$, gives us the space~$(\S^A_f)_2$.
\end{itemize}

These spaces may be collected as in the following diagram.
\begin{equation*}
    \begin{tikzcd}
    (\S^A_f)_0 & (\S^A_f)_1 \arrow[l,yshift=0.5ex] \arrow[l,yshift=-0.5ex]  & (\S^A_f)_2  \arrow[l,yshift=0.7ex] \arrow[l] \arrow[l,yshift=-0.7ex] \dots
    \end{tikzcd}
\end{equation*}

The arrows denote the various ways to naturally map from~$(\S^A_f)_{p+1}$ to~$(\S^A_f)_p$. For example:~$\sigma \in \mathrm{Sect}_f[a,b]$ and~$\rho \in \mathrm{Sect}_f[b,c]$ as above associate to a point in~$\Sect_f[a,b,c]$ and thus in~$(\S^A_f)_2$. We can map this point to~$\sigma \in \Sect_f[a,b]$, $\rho \in \Sect_f[b,c]$ or~$\sigma \ast \rho \in \Sect_f[a,c]$, all of which lie in~$(\S^A_f)_1$. In this way we obtain three continuous maps~$(\S^A_f)_1 \leftarrow (\S^A_f)_2$. 
For~$p>2$, the~$p+1$ maps~$(\S^A_f)_{p} \leftarrow (\S^A_f)_{p+1}$ are obtained in the same manner. 
The two arrows~$(\S^A_f)_1$ to~$(\S^A_f)_0$ finally correspond to the two ways to evaluate a section at its end-points.

The above diagram defines a simplicial space. We denote it by~$\S^A_f$ and call it the \textit{section complex}. In the same way that the \v{C}ech complex~$\breve{C}(U)$ encodes how the topology of the cover~$U$ combinatorially fits together, the section complex~$\S^A_f$ contains the information how topological information about sections between heights in~$A$ combinatorially fits together.   

\subparagraph{Reeb complexes} In Section \ref{section:cech}, we applied homology functors to the \v{C}ech complex and obtained simplicial abelian groups, that we eventually turned into chain complexes. These chain complexes were then collected on the first page of a spectral sequence. In the previous paragraph we then reviewed the section complex~$\S_f^A$ that encoded information about sections of the function~$f$. We may now apply homology levelwise to the section complex as well. Intuitively, this gives an object that captures how homological information flows between fibers of~$f$ along sections.\\  

\begin{definition}
The~$q$'th Reeb complex associated to a continuous function~$f \from X \to \R$ and a subset~$A\subset \R$ is defined to be the simplicial vector space denoted by~$\reeb^A_q$, given as
\begin{equation*}
    \begin{tikzcd}
    \homology_q (\S^A_f)_0 & \homology_q (\S^A_f)_1 \arrow[l,yshift=0.5ex] \arrow[l,yshift=-0.5ex]  & \homology_q (\S^A_f)_2  \arrow[l,yshift=0.7ex] \arrow[l] \arrow[l,yshift=-0.7ex] \dots,
    \end{tikzcd}
\end{equation*}
that is, by applying the~$q$'th homology functor levelwise to~$\S^A_f$. 
\end{definition}
As for the \v{C}ech complexes, we may organize all the Reeb complexes as the rows of the first page of a spectral sequence by considering their corresponding chain complexes. 
\begin{center}
\begin{tikzpicture}
	\draw (0,0) -- (8.5,0);
	\draw (0,0) -- (0,4.5);
	
	\draw[->] (3.5,1.1) -- (2.25,1.1);
	\draw[->] (3.5,2.6) -- (2.25,2.6);
	\draw[->] (3.5,4.1) -- (2.25,4.1);
	
	\draw[->] (6.75,1.1) -- (5.5,1.1);
	\draw[->] (6.75,2.6) -- (5.5,2.6);
	\draw[->] (6.75,4.1) -- (5.5,4.1);
	
	\node [above] at (3,1) {$\partial^1_{1,0}$};
	\node [above] at (3,2.5) {$\partial^1_{1,1}$};
	\node [above] at (3,4) {$\partial^1_{1,2}$};
	
	\node [above] at (6.25,1) {$\partial^1_{2,0}$};
	\node [above] at (6.25,2.5) {$\partial^1_{2,1}$};
	\node [above] at (6.25,4) {$\partial^1_{2,2}$};
	
	\node [] at (1.5,1) {$\homology_0(\S^A_f)_0$};
	\node [] at (1.5,2.5) {$\homology_1(\S^A_f)_0$};
	\node [] at (1.5,4) {$\homology_2(\S^A_f)_0$};
	
	\node [] at (4.5,1) {$\homology_0(\S^A_f)_1$};
	\node [] at (4.5,2.5) {$\homology_1(\S^A_f)_1$};
	\node [] at (4.5,4) {$\homology_2(\S^A_f)_1$};
	
	\node [] at (8,1) {$\homology_0(\S^A_f)_2$};
	\node [] at (8,2.5) {$\homology_1(\S^A_f)_2$};
	\node [] at (8,4) {$\homology_2(\S^A_f)_2$};

	\node [right] at (8.5,0) {$p$};
	\node [above] at (0,4.5) {$q$};
\end{tikzpicture}
\end{center}

\subparagraph{Truncated Reeb complexes} We now assume that~$f$ is a Reeb function with finitely many critical height levels~$A=(c_1 < \ldots < c_n)$. The class of Reeb functions includes Morse functions on smooth manifolds and piecewise linear functions on CW-complexes. See Definition 2.6 of \cite{trygsland2021} for a precise definition. We then take the following truncation of the~$q$'th Reeb complex, just considering sections between adjacent critical levels
\begin{equation*}
    \begin{tikzcd}
    \bigoplus\limits_{i=1}^n \homology_q f^{-1}(c_i) & \bigoplus\limits_{i=1}^{n-1} \homology_q \Sect_f[c_i,c_{i+1}] \arrow[l,yshift=0.5ex] \arrow[l,yshift=-0.5ex].
    \end{tikzcd}
\end{equation*}
We denote this object as~$\truncreeb_q^f$. The complex~$\mathcal{T}_q^f$ is much smaller than the original Reeb complex~$\reeb_q^A$, which makes the following statement valuable for computations. 
\begin{proposition}\label{proposition:comparereeb}
For~$f\from X \to \R$ a Reeb function, let~$A$ be its set of critical values. Then the chain complexes associated to~$\reeb_q^A$ and~$\truncreeb_q^f$ are quasi-isomorphic.
\end{proposition}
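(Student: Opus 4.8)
The plan is to realize $\truncreeb_q^f$ as a subcomplex of the normalized chain complex of $\reeb_q^A$ and to prove that the inclusion is a quasi-isomorphism by showing the quotient is acyclic. The structural fact I would lean on throughout is the \emph{unique decomposition} of sections: restricting a section to subintervals gives, for any intermediate height $a\le b\le c$, a homeomorphism
\begin{equation*}
\Sect_f[a,b,c]\;\cong\;\Sect_f[a,b]\times_{f^{-1}(b)}\Sect_f[b,c],
\end{equation*}
whose inverse is concatenation. Iterating, every concatenation space $\Sect_f[c_{i_0},\dots,c_{i_p}]$ is the iterated fibre product of the adjacent section spaces over the intervening fibres. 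Equivalently, $\S_f^A$ is the nerve of the topological category whose objects are the heights in $A$ and whose morphism spaces are the section spaces, and this category is freely generated by the sections between \emph{adjacent} critical values.

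First I would replace the chain complex of $\reeb_q^A$ by its normalized complex $N_\bullet$, to which it is quasi-isomorphic by the Dold--Kan normalization theorem. This discards degenerate simplices, so that $N_p=\bigoplus_{i_0<\dots<i_p}\homology_q\Sect_f[c_{i_0},\dots,c_{i_p}]$ ranges only over \emph{strictly} increasing chains. Since $N_0=\bigoplus_i\homology_q f^{-1}(c_i)$ and the adjacent summands $\bigoplus_i\homology_q\Sect_f[c_i,c_{i+1}]$ sit inside $N_1$, the truncated complex $\truncreeb_q^f$ is exactly the subcomplex of $N_\bullet$ spanned by the vertices and the adjacent edges. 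That this is a subcomplex is immediate: the differential of an adjacent edge is the alternating sum of its two endpoint evaluations, both of which land in $N_0$.

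The heart of the matter is that the \emph{interior} face maps are isomorphisms on $\homology_q$. Deleting a non-extremal vertex $c_{i_k}$ (with $0<k<p$) merges two adjacent section factors by concatenation and is the identity on the remaining factors, so by unique decomposition it is a homeomorphism and hence an isomorphism on $\homology_q$; by contrast the outer faces $d_0$ and $d_p$ forget an end section and are not isomorphisms. I would use this to show that the quotient $Q_\bullet=N_\bullet/\truncreeb_q^f$, which is spanned precisely by the chains that are neither a single vertex nor a single adjacent edge, is acyclic. Concretely each non-adjacent pair $c_i<c_j$ in $Q_1$ cancels against the chain obtained by inserting the missing vertex $c_{i+1}$, the relevant interior face being the splitting isomorphism; assembling these cancellations into a contracting homotopy $h$ that inserts $c_{i_0+1}$ after the first vertex exhibits $Q_\bullet$ as acyclic. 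The long exact sequence of $0\to\truncreeb_q^f\to N_\bullet\to Q_\bullet\to 0$ then shows the inclusion is a quasi-isomorphism, proving the proposition.

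I expect the main obstacle to be the bookkeeping in this last step: the naive first-vertex insertion satisfies $\partial h+h\partial=\id$ only up to correction terms supported on chains with a strictly larger initial vertex, so the homotopy identity does not hold on the nose. I would therefore filter $Q_\bullet$ by the first vertex $c_{i_0}$ --- or, equivalently, induct on $n=\lvert A\rvert$ using the short exact sequence that separates off the chains involving the top value $c_n$ --- so that the correction terms become lower order and $h$ exhibits acyclicity on each associated graded piece. The Reeb hypothesis enters only to guarantee that $A$ is finite and that the section spaces between critical values are well behaved enough for the fibre-product decomposition above to hold; see Definition 2.6 of \cite{trygsland2021}.
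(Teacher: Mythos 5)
Your proposal is correct, but it takes a genuinely different route from the paper. The paper's proof is essentially a citation chain: Proposition 4.2 of \cite{trygsland2021} gives $\homology_p\reeb_q^A=0$ for $p\geq 2$, the differential of $\truncreeb_q^f$ is identified with the critical differential of Section 4.2 of \cite{trygsland2021}, and Proposition 4.9 there matches the homology in degrees $0$ and $1$. You instead argue directly from the unique-decomposition homeomorphism $\Sect_f[a,c]\cong\Sect_f[a,b]\times_{f^{-1}(b)}\Sect_f[b,c]$, which makes every interior face of $\S_f^A$ a homeomorphism and hence an isomorphism on $\homology_q$, and then contract the quotient $Q_\bullet$ by the ``insert $c_{i_0+1}$'' homotopy. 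I checked the details you leave implicit: with $h=0$ on chains whose second vertex is already $c_{i_0+1}$ (forced by normalization), the identity $\partial h+h\partial=-\id$ holds exactly on the associated graded of the first-vertex filtration, the commutation of $h$ past the faces $d_j$, $j\geq 1$, being the simplicial identity $d_1d_{j+1}=d_jd_1$, while $d_0$ strictly increases the first vertex and so is killed by the (finite) filtration. So the argument closes. What your route buys: it uses the Reeb hypothesis only through finiteness of $A$ --- the fibre-product decomposition of section spaces holds for any continuous $f$ --- so it actually proves the quasi-isomorphism for an arbitrary finite $A\subset\R$, not just the critical set; it is also self-contained. What the paper's route buys is brevity and reuse of \cite{trygsland2021}, whose Proposition 4.2 is needed elsewhere anyway for the degeneration of the section spectral sequence. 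If you write your version up, the two points to make explicit are (i) the continuity of concatenation as the inverse of restriction, i.e.\ the closed-cover homeomorphism $\map([a,c],X)\cong\map([a,b],X)\times_{\map(\{b\},X)}\map([b,c],X)$, and (ii) that the simplices with a repeated height are exactly the degenerate ones (since $\Sect_f[a,a]=f^{-1}(a)$), so the normalized complex is indexed by strictly increasing chains as you claim.
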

\begin{proof}
Because~$A$ contains all the critical values of the Reeb function~$f$, we know from Proposition 4.2 of \cite{trygsland2021} that the spectral sequence associated to~$\reeb_q^A$ converges on the second page and that~$\homology_p \reeb^A_q = 0$ for~$p \geq 2$. Furthermore, we recognize the differential induced by the facemaps of~$\truncreeb_q^f$ as the \textit{critical differential}
\begin{equation*}
    \partial^s_{1,q} \from \bigoplus_{c_i} \homology_q \Sect_f[c_i,c_{i+1}] \to \bigoplus_{c_i} \homology_q f^{-1}(c_i)
\end{equation*}
as defined in Section 4.2 of \cite{trygsland2021}. Then, by Proposition 4.9 of \cite{trygsland2021},~$\homology_p\reeb^A_q \cong \homology_p \truncreeb^f_q$. 
\end{proof}

\subparagraph{Zigzag persistence and Reeb complexes} We can wrap out the direct sums of the complex~$\truncreeb_q^f$ into a zigzag module 
\begin{equation*}
    \begin{tikzcd}
\homology_q f^{-1}a_1 \leftarrow \homology_q\Sect_f[a_1,a_2] \rightarrow  \cdots \leftarrow \homology_q \Sect_f [a_{n-1},a_n] \rightarrow \homology_q f^{-1}(a_n). 
\end{tikzcd}
\end{equation*}
Comparing this zigzag to the levelset zigzag of~$f$, we notice two differences: 
\begin{enumerate}[1)]
    \item $\Sect_f[a_{i-1},a_{i}]$ and~$f^{-1}[a_{i-1},a_i]$ are different spaces in general and
    \item the arrows in the levelset zigzag are reversed compared to~$\truncreeb_q^f$.
\end{enumerate}
The following example illuminates these differences:

\begin{example}
Consider a cylinder with pinched boundary circles
\begin{center}
\begin{tikzpicture}[scale= 0.7]
   \draw[->] (16,-1.5) -- (16,3.5);
    \node[right] at (16,-1) {$0$};
    \node[right] at (16,3) {$1$};
    \node[above] at (16,3.5) {$\mathbb{R}$};
    
    \draw[] (9.5,3) arc [start angle=0,end angle=360, x radius=1.5cm, y radius=0.5cm];
    \draw[] (12.5,3) arc [start angle=0,end angle=360, x radius=1.5cm, y radius=0.5cm];
    \draw[dotted] (9.5,-1) arc [start angle=0,end angle=180, x radius=1.5cm, y radius=0.5cm];
    \draw[] (6.5,-1) arc [start angle=-180,end angle=0, x radius=1.5cm, y radius=0.5cm];
    \draw[dotted] (12.5,-1) arc [start angle=0,end angle=180, x radius=1.5cm, y radius=0.5cm];
    \draw[] (9.5,-1) arc [start angle=-180,end angle=0, x radius=1.5cm, y radius=0.5cm];
    \draw[dotted] (12.5,1) arc [start angle=0,end angle=360, x radius=3cm, y radius=0.5cm];
    \draw[dotted] (9.8,1) arc [start angle=0,end angle=360, x radius=0.3cm, y radius=2cm];
    \draw[] (6.5,-1) -- (6.5,3);
    \draw[] (12.5,-1) -- (12.5,3);
    \node[right] at (12.6,1) {$X$};
    \node[above] at (8,3.5) {$\alpha$};
    \node[above] at (11,3.5) {$\beta$};
    \node[right] at (9.8,1) {$\gamma$};
    \end{tikzpicture}
\end{center}
with a mapping to $\R$ defined as 
\begin{equation*}
    \begin{tikzcd}
        S^1 \times [0,1] \arrow[r,"\pr_1"] \arrow[rr,bend right,"f"'] & \left[0,1\right] \arrow[r,hookrightarrow] & \R.
    \end{tikzcd}
\end{equation*}
We compute~$\truncreeb_0^f$ and~$\truncreeb_1^f$ in coordinates:

\begin{center}
    \begin{tikzcd}[ampersand replacement=\&,column sep=2.25em]
        k \& \& k \& \& k^2 \& \& k^2\\
         \& k\arrow[ul, "-1"] \arrow[ur, "1", swap]\& \& 
         \& \& k\arrow[ul, "
         \begin{bmatrix}
             -1\\
             -1
         \end{bmatrix}
         "] \arrow[ur,"
         \begin{bmatrix}
             1\\
             1
         \end{bmatrix}
         ", swap]\&\\
    \end{tikzcd}
\end{center}
The pre-image~$h^{-1}(0,1)=X$ deformation retracts onto the two horizontal circles~$\alpha$,~$\beta$ and the vertical circle~$\gamma$ depicted above. Pick these three circles as generators in~$\homology_1$ to calculate~$\homology_0$ and~$\homology_1$ of the corresponding levelset zigzags in coordinates:
\begin{center}
    \begin{tikzcd}[ampersand replacement=\&,column sep=2.25em]

         \& k \& \&  \& \& k^3  \&\\
        
        k\arrow[ur, "1"] \& \& k\arrow[ul, "1", swap] \& \& k^2 \arrow[ur, "
         {\begin{bmatrix}
             1 & 0\\
             0 & 1\\
             -1 & 1
         \end{bmatrix}}
         ", near start]
         \& \& k^2\arrow[ul,"
         {\begin{bmatrix}
             1 & 0\\
             0 & 1\\
             0 & 0
         \end{bmatrix}}
         ", near start, swap]
    \end{tikzcd}
\end{center}
Note the distinct difference both in zeroth and first homology. However, taking direct sums across the middle rows in the concatenated diamonds
\begin{center}
    \begin{tikzcd}[ampersand replacement=\&,column sep=2.25em]
        \& k \& \& \& \& k^3\&\\
        k\arrow[ur] \& \& k\arrow[ul] \& \& k^2 \arrow[ur]\& \& k^2\arrow[ul]\\
        \& k \arrow[ul]\arrow[ur]\& \& \& \& k\arrow[ul]\arrow[ur]\&,
    \end{tikzcd}
\end{center}
results in sequences
\begin{center}
    \begin{tikzcd}[ampersand replacement=\&,column sep=5em]
        k \arrow[r, "{\begin{bmatrix}
         -1\\
         1
        \end{bmatrix}}"] \& k^2 \arrow[r, "{\begin{bmatrix}
         1 & 1
        \end{bmatrix}}"] \& k \&
        k \arrow[r, "{\begin{bmatrix}
         -1\\
         -1\\
         1\\
         1
        \end{bmatrix}}"] \& k^4 \arrow[r, "{\begin{bmatrix}
        1 & 0 & 1 & 0\\
        0 & 1 & 0 & 1\\
        -1 & 1 & 0 & 0
        \end{bmatrix}}"] \& k^3
    \end{tikzcd}.
\end{center}
that are exact in the middle term.
In this example, we can thus translate between the barcode of~$\truncreeb^f_q$ and the levelset zigzag modules via the diamond principle in~\cite{carlsson2009zigzag}.
\end{example}

The above observation generalises as we now demonstrate.
\begin{proposition}[Diamond Principle]
\label{prop:diamondPrinciple}
Let~$f \from X \to \R$ be a Reeb function. 
Then, for every pair of successive critical values~$a<b$, the sequence
\[
\homology_q \Sect_f[a,b]\rightarrow \homology_qf^{-1}a \oplus \homology_q f^{-1}b \rightarrow \homology_q f^{-1}[a,b]
\]
is exact at the middle term.
\end{proposition}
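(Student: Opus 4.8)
The plan is to realize the displayed sequence as the middle-term portion of a Mayer--Vietoris sequence for a suitable open cover of $f^{-1}[a,b]$, after identifying the section space with a regular fibre. First I would pin down the two maps: the left one is induced by the outer face maps of the section complex, i.e.\ by the endpoint evaluations $\ev_a,\ev_b \from \Sect_f[a,b]\to f^{-1}(a),f^{-1}(b)$, while the right one is induced by the fibre inclusions $f^{-1}(a),f^{-1}(b)\hookrightarrow f^{-1}[a,b]$. That the composite vanishes is immediate and conceptual: the two maps $\Sect_f[a,b]\to f^{-1}[a,b]$ sending $\rho\mapsto\rho(a)$ and $\rho\mapsto\rho(b)$ are homotopic through $H(\rho,t)=\rho\big((1-t)a+tb\big)$, which slides a section along itself and stays inside $f^{-1}[a,b]$. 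Hence $(i_a)_\ast(\ev_a)_\ast=(i_b)_\ast(\ev_b)_\ast$, so the image of the first map lies in the kernel of the second.

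For the reverse inclusion I would build a Mayer--Vietoris square. Since $a,b$ are successive critical values, $(a,b)$ contains no critical value, so I may pick a regular value $m\in(a,b)$ and cover $f^{-1}[a,b]$ by the open thickenings $V_-=f^{-1}([a,m+\varepsilon))$ and $V_+=f^{-1}((m-\varepsilon,b])$, whose intersection is the regular slab $f^{-1}((m-\varepsilon,m+\varepsilon))$. The Reeb structure of $f$ (Definition 2.6 and the regularity statements of \cite{trygsland2021}) supplies deformation retractions $V_-\simeq f^{-1}(a)$ and $V_+\simeq f^{-1}(b)$ by flowing down, respectively up, across the critical-value-free interior, together with $V_-\cap V_+\simeq f^{-1}(m)$. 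Feeding these equivalences into the Mayer--Vietoris long exact sequence of $\{V_-,V_+\}$ produces a sequence exact at $\homology_q f^{-1}(a)\oplus\homology_q f^{-1}(b)$, whose outgoing map is precisely the inclusion-induced map in the proposition.

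It then remains to identify the \emph{incoming} Mayer--Vietoris map with the section evaluation map, which is where the section complex enters. The key lemma I would prove is that the evaluation $\ev_m \from \Sect_f[a,b]\to f^{-1}(m)$ is a homotopy equivalence: its fibre over a point $x$ is the product of the space of sections on $[a,m]$ ending at $x$ with the space of sections on $[m,b]$ starting at $x$, and Reeb regularity forces each such fixed-endpoint section space to be contractible (over a half-open regular interval a section is determined up to contractible choice, and extension across the single critical endpoint is homotopically unique). Granting this, $\ev_m$ identifies $\homology_q\Sect_f[a,b]$ with $\homology_q(V_-\cap V_+)$, and I would check that under the retractions the restriction $V_-\cap V_+\to V_\pm$ corresponds to $\ev_a$, respectively $\ev_b$: the restriction of $\rho$ to $[a,m]$ is a path in $V_-$ from $\rho(a)\in f^{-1}(a)$ to $\rho(m)$, which witnesses that the flow-down of $\rho(m)$ agrees with $\ev_a(\rho)$ up to homotopy, and symmetrically at the top. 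With these identifications the proposition's sequence becomes isomorphic to the middle-exact stretch of the Mayer--Vietoris sequence.

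The main obstacle is the key lemma of the last paragraph: showing that $\ev_m$ is an equivalence and that the evaluation maps match the Mayer--Vietoris inclusions up to coherent homotopy. Both hinge on the precise regularity built into the definition of a Reeb function, so the real work is extracting from \cite{trygsland2021} the statement that sections behave like a trivial fibration over critical-value-free intervals and extend homotopy-uniquely across an isolated critical level; once that is in hand, the remainder is the formal bookkeeping of transporting the Mayer--Vietoris sequence along the resulting equivalences.
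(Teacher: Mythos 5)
Your proposal follows essentially the same route as the paper: replace $\homology_q\Sect_f[a,b]$ by the homology of an intermediate regular fibre via evaluation, and then identify the resulting sequence with the middle-exact portion of a Mayer--Vietoris sequence for a two-set cover retracting onto $f^{-1}(a)$ and $f^{-1}(b)$. The one step you flag as the main obstacle --- that evaluation at a regular value is a homotopy equivalence from $\Sect_f[a,b]$ --- is exactly Proposition 3.10 of \cite{trygsland2021}, which the paper simply cites, so no new argument is needed there.
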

\begin{proof}
Evaluation at~$\frac{a+b}{2}$ defines a homotopy equivalence~$\mathrm{Sect}_f[a,b] \rightarrow f^{-1}(\frac{a+b}{2})$ by Proposition 3.10 in~\cite{trygsland2021}. The homotopy inverse is given by associating canonical sections (\textit{flow lines}) to points in the intermediate fiber~$f^{-1}(\frac{a+b}{2})$, which defines a map~$f^{-1}(\frac{a+b}{2})\rightarrow f^{-1}a \coprod f^{-1}b$. This gives a commutative ladder
\begin{center}
\begin{tikzcd}
    \homology_q \mathrm{Sect}_f [a,b]\arrow[r]\arrow[d] & \homology_q f^{-1}a \oplus \homology_q f^{-1}b \arrow[r]\arrow[d, equal]  & \homology_q f^{-1}[a,b]\arrow[d, equal] \\
    \homology_q f^{-1} (\frac{a+b}{2})\arrow[r] & \homology_q f^{-1}a \oplus \homology_q f^{-1}b \arrow[r] & \homology_q f^{-1}[a,b]
\end{tikzcd}
\end{center}
where all the vertical arrows are isomorphisms. Let~$I_a$ and~$I_b$ be open intervals in~$\mathbb{R}$ that contain~$a$ and~$b$, respectively. We can safely assume that~$a$ is the only critical value contained in~$f(I_a)$ and similarly that~$b$ is the only such value contained in~$f(I_b)$. Further we assume that the union~$f^{-1}I_a\cup f^{-1}I_b$ contains~$f^{-1}[a,b]$. By using the available inclusions, we get
\begin{center}
\begin{tikzcd}
    \homology_q f^{-1} (\frac{a+b}{2})\arrow[r]\arrow[d] & \homology_q f^{-1}a \oplus \homology_q f^{-1}b \arrow[r]\arrow[d] & \homology_q f^{-1}[a,b]\arrow[d]\\
    \homology_q f^{-1}I_a\cap f^{-1}I_b\arrow[r] & \homology_q f^{-1}I_a \oplus \homology_q f^{-1} I_b \arrow[r] & \homology_q f^{-1}I_a\cup f^{-1}I_b.
\end{tikzcd}
\end{center}
The vertical arrows are isomorphisms due to Lemma 2.8 in~\cite{trygsland2021}. We recognize the final row as part of the well-known Mayer-Vietoris sequence which is exact. 
\end{proof}

\subparagraph{Persistent homology and Reeb complexes} In Section \ref{section:cech}, we encountered the mapping telescope~$C_{\mathbb{X}}$ associated to a filtration 
\begin{equation*}
    \mathbb{X} \from X_{i_0} \hookrightarrow X_{i_1} \hookrightarrow \ldots \hookrightarrow X_{i_{n-1}} \hookrightarrow X_{i_n}.
\end{equation*}
This came with a height function~$f_{\mathbb{X}} \from C_{\mathbb{X}} \to \R$ of which we want to consider the section spaces. It turns out that in this case~$\truncreeb^f_q$ is isomorphic to the~$q$'th persistence module of the filtration. This will be a consequence of the following two lemmas. 
\begin{lemma}\label{lemma:persistencelemma1}
Let~$\iota \from X_{i_0} \hookrightarrow X_{i_1}$ be an inclusion, let~$C_{\iota}$ be the associated mapping cylinder and let~$f_{\iota} \from C_{\iota} \to \R$ be the induced height function that maps~$X_{i_0}$ to~$i_0$ and~$X_{i_1}$ to~$i_1$. Then the diagram 
\begin{equation*}
	\begin{tikzcd}
		& \Sect_{f_{\iota}}[i_0,i_1] \arrow[dl,"d_1"] \arrow[dr,"d_0"] & \\
		f_{\iota}^{-1}i_0 \arrow[rr,hookrightarrow] & & f_{\iota}^{-1}i_1
	\end{tikzcd},
\end{equation*} 
where~$d_0$ and~$d_1$ denote the respective face-maps in the section complex, commutes up to homotopy. 
\end{lemma}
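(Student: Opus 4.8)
The plan is to write down an explicit homotopy exhibiting $\iota\circ d_1\simeq d_0$ as maps $\Sect_{f_{\iota}}[i_0,i_1]\to f_{\iota}^{-1}i_1$, built from the evaluation map on sections together with the canonical retraction of the mapping cylinder onto its target. First I would fix the geometric picture. Writing $C_{\iota}=\bigl(X_{i_0}\times[i_0,i_1]\bigr)\sqcup X_{i_1}/\!\sim$ with $(x,i_1)\sim\iota(x)$, the height function sends $(x,t)\mapsto t$ and collapses $X_{i_1}$ to $i_1$, so that $f_{\iota}^{-1}i_0\cong X_{i_0}$, $f_{\iota}^{-1}i_1=X_{i_1}$, and for $t<i_1$ the fiber over $t$ is $X_{i_0}\times\{t\}$. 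The two face maps are evaluation at the endpoints, $d_1(\rho)=\rho(i_0)$ and $d_0(\rho)=\rho(i_1)$, and under these identifications the bottom edge of the triangle is $\iota$ itself.

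The homotopy I would use is the composite
\[
\Sect_{f_{\iota}}[i_0,i_1]\times[0,1]\xrightarrow{\ \id\times\lambda\ }\Sect_{f_{\iota}}[i_0,i_1]\times[i_0,i_1]\xrightarrow{\ \ev\ }C_{\iota}\xrightarrow{\ r\ }X_{i_1},
\]
where $\lambda(s)=i_0+s(i_1-i_0)$, where $\ev(\rho,t)=\rho(t)$ is the evaluation map, and where $r\from C_{\iota}\to X_{i_1}$ is the standard retraction given by $r(x,t)=\iota(x)$ on the cylinder and $r|_{X_{i_1}}=\id$. Evaluating at the two ends of the $s$-interval recovers exactly the two legs of the triangle: at $s=0$ one gets $r(\rho(i_0))=\iota(\rho(i_0))=\iota\circ d_1(\rho)$, since $\rho(i_0)$ lies in the bottom copy $X_{i_0}\times\{i_0\}$; at $s=1$ one gets $r(\rho(i_1))=\rho(i_1)=d_0(\rho)$, since $\rho(i_1)\in X_{i_1}$ and $r$ restricts to the identity there. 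Hence this composite is a homotopy from $\iota\circ d_1$ to $d_0$, and the triangle commutes up to homotopy.

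The substance of the argument lies in the continuity of this composite, and that is the step I would treat carefully; the endpoint identifications above are immediate. Two standard facts suffice. First, $r$ is well defined and continuous: on the cylinder it is $\iota\circ\pr_1$ and on $X_{i_1}$ it is the identity, and the two prescriptions agree on the glued seam $\{t=i_1\}$ via $(x,i_1)\sim\iota(x)$, so $r$ descends to the quotient topology of $C_{\iota}$. Second, $\ev$ is continuous because $[i_0,i_1]$ is compact (hence locally compact) Hausdorff and $\Sect_{f_{\iota}}[i_0,i_1]$ carries the subspace topology from $\map([i_0,i_1],C_{\iota})$ with the compact-open topology, so evaluation is continuous by the usual exponential-law argument. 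I expect the only point genuinely deserving attention to be the continuity of $r$ across the upper seam of the cylinder, i.e. verifying that it is continuous as a map out of the quotient; once that is secured the composite is continuous and no further computation is required.
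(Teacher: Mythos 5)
Your proof is correct and takes essentially the same route as the paper: the paper also defines the homotopy (in adjoint form) by postcomposing each section $\rho$ with the canonical retraction $\phi\from C_{\iota}\to X_{i_1}$ obtained from the pushout description of the mapping cylinder, which is exactly your map $r$. Your writeup is in fact slightly more complete, since you explicitly check the two endpoint identifications and the continuity of $r$ and of evaluation, which the paper leaves implicit.
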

\begin{proof}
	We construct a homotopy 
	\begin{equation*}
		\eta \from \Sect_{f_{\iota}}[i_0,i_1] \times [i_0,i_1] \to X_{i_1}
	\end{equation*} 
	It will be convenient to define it in terms of its adjoint
	\begin{equation*}
		\tilde{\eta} \from \Sect_{f_{\iota}}[i_0,i_1] \to \map([i_0,i_1],X_{i_1}). 
	\end{equation*}
	Postcomposing a section~$(\rho \from [i_0,i_1] \to C_{\iota}) \in \Sect_{f_{\iota}}[i_0,i_1]$ with the map~$\phi \from C_{\iota} \to X_{i_1}$ induced from the universal property of the pushout
	\begin{equation*}
		\begin{tikzcd}
			X_{i_0} \arrow[r] \arrow[d] & X_{i_1} \arrow[d] \arrow[ddr,"\id"] & \\
			X_{i_0} \times [i_0,i_1] \arrow[r] \arrow[rrd,"\iota \circ \pr_0"'] & C_{\iota} \arrow[dr,"\phi"] & \\
			& & X_{i_1}
		\end{tikzcd},
	\end{equation*}
	yields a continuous map~$\phi \circ \rho \from [i_0,i_1] \to X_{i_1}$. We define $\tilde{\eta}(\rho)=\phi \circ \rho$. 
\end{proof}
\begin{lemma}\label{lemma:persistencelemma2}
	In the setting of the previous lemma, the face map
	\begin{equation}
		d_1 \from \Sect_{f_{\iota}}[i_0,i_1]  \to X_{i_0}
	\end{equation} 
	is a homotopy equivalence. 
\end{lemma}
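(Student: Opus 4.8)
The plan is to build an explicit homotopy inverse to $d_1$ out of the \emph{flow lines} of the cylinder and to identify $\Sect_{f_\iota}[i_0,i_1]$ with the path space of $X_{i_0}$, where the statement is classical. First I would define the candidate inverse $s\from X_{i_0}\to\Sect_{f_\iota}[i_0,i_1]$ sending $x$ to the straight section $s(x)(t)=[x,t]$ running up the cylinder coordinate, where $[x,i_1]=\iota(x)$ under the pushout identification of Lemma \ref{lemma:persistencelemma1}. This is continuous, and $d_1\circ s=\id_{X_{i_0}}$ on the nose because $s(x)(i_0)=[x,i_0]$ is just $x$ in $f_\iota^{-1} i_0$.

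The heart of the argument is that every section is determined by its \emph{shadow} $\gamma_\rho\from[i_0,i_1]\to X_{i_0}$, characterised by $\rho(t)=[\gamma_\rho(t),t]$. Over the open cylinder $f_\iota^{-1}[i_0,i_1)\cong X_{i_0}\times[i_0,i_1)$ the shadow is simply the $X_{i_0}$-coordinate of $\rho$; postcomposing with the map $\phi$ from Lemma \ref{lemma:persistencelemma1} gives $\iota\circ\gamma_\rho=\phi\circ\rho$, so $\gamma_\rho$ extends continuously across $t=i_1$ precisely because $\iota$ is a closed embedding. I would package this as a homeomorphism $\Sect_{f_\iota}[i_0,i_1]\cong\map([i_0,i_1],X_{i_0})$, with inverse $\gamma\mapsto(t\mapsto[\gamma(t),t])$, under which $d_1$ becomes evaluation at the initial point and $s$ becomes the inclusion of constant paths. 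The desired homotopy is then the classical contraction of paths onto their starting point, $H_u(\gamma)(t)=\gamma\bigl(i_0+(1-u)(t-i_0)\bigr)$, which fixes $\gamma(i_0)$ throughout, is the identity at $u=0$ and the constant-path retraction at $u=1$; transported back along the homeomorphism it witnesses $s\circ d_1\simeq\id$ and completes the proof.

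The main obstacle lives entirely at the collapsed end $t=i_1$. Everything over $[i_0,i_1)$ is immediate from the product structure of the open cylinder, so the work is to show that $\gamma_\rho$ is well defined and continuous at $i_1$ and that $\rho\mapsto\gamma_\rho$ is continuous for the compact-open topologies. Both reduce to the identity $\iota\circ\gamma_\rho=\phi\circ\rho$ together with $\iota$ being an inclusion: since $\iota(X_{i_0})$ is closed, continuity of $\phi\circ\rho$ forces $\rho(i_1)\in\iota(X_{i_0})$ and $\gamma_\rho=\iota^{-1}\circ(\phi\circ\rho)$, which is continuous. The same embedding property is what controls the joint continuity of $H_u$ as $u\to0$ and $t\to i_1$, the only place where the contraction might otherwise fail to be continuous.
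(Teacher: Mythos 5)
Your argument is correct, but it is genuinely different from what the paper does: the paper's entire proof is a citation of Proposition 4.10 in \cite{trygsland2021}, a general statement from the section-space machinery, whereas you give a self-contained, explicit proof by exhibiting a homeomorphism $\Sect_{f_\iota}[i_0,i_1]\cong\map([i_0,i_1],X_{i_0})$ under which $d_1$ becomes evaluation at the initial endpoint, and then contracting the path space. The trade-off is clear. The citation is short and places the lemma inside the general theory of Reeb functions; your route produces explicit formulas (in particular a section $s$ of $d_1$ with $d_1\circ s=\id$ on the nose, and a concrete homotopy $s\circ d_1\simeq\id$), which is more information than the lemma asks for and is exactly what one would want if one later needs naturality of the inverse. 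The cost is the point-set care you correctly identify at the collapsed end $t=i_1$: your argument silently strengthens the hypothesis ``$\iota$ is an inclusion'' to ``$\iota$ is an embedding with closed image,'' since that is what forces $\rho(i_1)\in\iota(X_{i_0})$ and makes $\gamma_\rho=\iota^{-1}\circ\phi\circ\rho$ continuous. In the paper's context (filtrations, where the $X_{i_k}$ are subspaces and typically closed, e.g.\ subcomplexes) this is harmless, but you should state it as a standing assumption rather than discover it mid-proof, since for a non-closed inclusion a section could terminate at a point of $\overline{\iota(X_{i_0})}\setminus\iota(X_{i_0})$ and the claimed homeomorphism would fail. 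With that hypothesis made explicit, every step checks out: the product structure over $[i_0,i_1)$ identifies sections with paths there, closedness handles the endpoint, and continuity of the reparametrization homotopy in the compact-open topology follows from local compactness of $[i_0,i_1]$.
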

\begin{proof}
	This follows immediately from Proposition 4.10 in \cite{trygsland2021}. 
\end{proof}
We return to the section complex associated with the mapping telescope of the filtration~$\mathbb{X}$. The zigzag obtained from~$\truncreeb^f_q$ may be extended as follows: 
\begin{equation*}
	\begin{tikzcd}
		\homology_q \Sect_{f_{\mathbb{X}}}[i_0,i_1] \arrow[r] \arrow[d] & \homology_q X_{i_1} \arrow[d,equals] & \cdots \arrow[l] &  \homology_q \Sect_{f_{\mathbb{X}}}[i_{n-1},i_{n}] \arrow[d] \arrow[r] \arrow[l] & \homology_q X_{i_n} \arrow[d,equals] \\
		\homology_q X_{i_0} \arrow[r] & \homology_q X_{i_1} \arrow{r} &  \cdots \arrow[r] &  \homology_q X_{i_{n-1}} \arrow[r] & \homology_q X_{i_n} 
	\end{tikzcd}
\end{equation*}
where the lower row is just the ordinary~$q$'th persistence module of the filtration~$\mathbb{X}$, i.e. all the maps are induced by inclusions. We note that all squares in this diagram commute due to Lemma \ref{lemma:persistencelemma1}. Furthermore, all the arrows pointing to the left in the top row can be inverted due to Lemma \ref{lemma:persistencelemma2}. Thus,
\begin{proposition} \label{proposition:persistence}
The commutative ladder
\begin{equation*}
	\begin{tikzcd}
		\homology_q \Sect_{f_{\mathbb{X}}}[i_0,i_1] \arrow[d] \arrow[r]& \homology_q X_{i_1} \arrow[d,equals] \arrow[r]  & \cdots \arrow[r] &  \homology_q \Sect_{f_{\mathbb{X}}}[i_{n-1},i_{n}]  \arrow[d] \arrow[r] & \homology_q X_{i_n} \arrow[d,equals] \\
		\homology_q X_{i_0} \arrow[r] & \homology_q X_{i_1} \arrow{r} &  \cdots \arrow[r] & \homology_q X_{i_{n-1}} \arrow[r] & \homology_q X_{i_n} 
	\end{tikzcd}
\end{equation*}
defines an isomorphism of persistence modules.
\end{proposition}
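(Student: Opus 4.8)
The plan is to read the asserted isomorphism off the two preceding lemmas, treating the displayed ladder as a degreewise statement and supplying only the small amount of bookkeeping the authors leave implicit. First I would fix notation by writing~$\iota_k \from X_{i_{k-1}} \hookrightarrow X_{i_k}$ for the $k$-th inclusion of the filtration and recalling that the part of the mapping telescope~$C_{\mathbb{X}}$ lying over~$[i_{k-1},i_k]$, i.e.~$f_{\mathbb{X}}^{-1}[i_{k-1},i_k]$, is precisely the mapping cylinder~$C_{\iota_k}$. Since a section of~$f_{\mathbb{X}}$ defined on~$[i_{k-1},i_k]$ has image confined to this cylinder, one gets~$\Sect_{f_{\mathbb{X}}}[i_{k-1},i_k]=\Sect_{f_{\iota_k}}[i_{k-1},i_k]$ with matching face maps. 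This local identification is what licenses applying Lemma \ref{lemma:persistencelemma1} and Lemma \ref{lemma:persistencelemma2} to each adjacent pair of the telescope; I regard it as the one genuinely geometric point and would state it carefully, although it is routine given the construction of~$C_{\mathbb{X}}$ as an iterated pushout.

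With this in hand the two structural inputs are immediate. By Lemma \ref{lemma:persistencelemma2} the left face map~$d_1 \from \Sect_{f_{\mathbb{X}}}[i_{k-1},i_k] \to X_{i_{k-1}}$ is a homotopy equivalence, so~$\homology_q d_1$ is an isomorphism and the backward arrows of the top zigzag may be inverted. By Lemma \ref{lemma:persistencelemma1} the evaluation triangle commutes up to homotopy, whence applying~$\homology_q$ yields the strict identity~$\homology_q d_0 = \homology_q \iota_k \circ \homology_q d_1$ of maps out of~$\homology_q \Sect_{f_{\mathbb{X}}}[i_{k-1},i_k]$. Combining these, the composite structure map of the top row, namely~$\homology_q d_0 \circ (\homology_q d_1)^{-1} \from \homology_q X_{i_{k-1}} \to \homology_q X_{i_k}$, equals~$\homology_q \iota_k$ on the nose. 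Thus, after inverting the invertible backward maps, the top row is an honest persistence module over~$i_0 < \cdots < i_n$ whose structure maps are exactly the inclusion-induced maps of~$\homology_q \mathbb{X}$.

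It then remains to package this as the claimed isomorphism of the displayed ladder. I would take the vertical maps to be~$\homology_q d_1$ on the section columns (isomorphisms by Lemma \ref{lemma:persistencelemma2}) and the identity on the fiber columns, and verify that every square commutes: the squares built from~$d_0$ on top and~$\iota_k$ on the bottom commute by the homology-level identity~$\homology_q d_0 = \homology_q \iota_k \circ \homology_q d_1$ just recorded, while the squares built from an inverted~$d_1$ commute by construction, the right-hand vertical~$\homology_q d_1$ cancelling~$(\homology_q d_1)^{-1}$ against the identity on the other side. As all vertical maps are isomorphisms and all squares commute, the verticals assemble into a natural isomorphism of functors on the index poset, which is exactly an isomorphism of persistence modules with~$\homology_q \mathbb{X}$.

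The formal content is light once the lemmas are granted, so I expect the only delicate step to be the bookkeeping needed to realise the zigzag as a genuine functor on a totally ordered set — confirming that the inverted composites compose correctly, so that the top row really is functorial rather than merely a diagram, and handling the two endpoints~$\homology_q X_{i_0}$ and~$\homology_q X_{i_n}$ where the alternation of section- and fiber-columns begins and ends. None of this needs ideas beyond Lemma \ref{lemma:persistencelemma1} and Lemma \ref{lemma:persistencelemma2}, so the main obstacle is organisational rather than mathematical.
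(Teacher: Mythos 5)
Your proposal is correct and follows essentially the same route as the paper: the authors likewise extend the zigzag of~$\truncreeb^f_q$ to the displayed ladder, invoke Lemma \ref{lemma:persistencelemma1} for the commutativity of the squares and Lemma \ref{lemma:persistencelemma2} to invert the backward arrows. The extra bookkeeping you supply (identifying~$f_{\mathbb{X}}^{-1}[i_{k-1},i_k]$ with the mapping cylinder~$C_{\iota_k}$ so the lemmas apply locally) is left implicit in the paper but is exactly the right justification.
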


\newpage

\section*{Acknowledgements}
We would like to thank our supervisors Benjamin Dunn and Markus Szymik for their valuable input and the kind and motivating encouragement. This work was financially supported from the Department of Mathematical Sciences at the NTNU and from NTNU's Enabling Technologies Biotechnology program.

\addcontentsline{toc}{section}{References}
\bibliographystyle{amsalpha}
\bibliography{SectionSpacesTDA}

\end{document}